\newtheorem{theorem}{Theorem}
\newtheorem{proposition}{Proposition}
\newtheorem{example}{Example}
\DeclarePairedDelimiter\ceil{\lceil}{\rceil}
\newcommand\fnsep{\textsuperscript{,}}
\newcommand{\bmatrixAdjustSize}[2][.8]{%
  \scalebox{#1}{%
    \renewcommand{\arraystretch}{.9}%
    $\begin{bmatrix}#2\end{bmatrix}$%
  }
}
\colorlet{linkequation}{MidnightBlue}
\newcommand{\tp}{^{\top}}
\newcommand{\itp}{^{-\top}}
\newcommand{\iv}{^{-1}}
\newcommand\tpp[2][-6]{{#2}^{\mkern#1mu\top}} 
\let\NAT@parse\undefined
\crefname{equation}{}{}
\crefname{enumi}{}{}
\tikzset{%
  highlight/.style={rectangle,rounded corners,fill=red!15,draw,
    fill opacity=0.5,thick,inner sep=0pt}
}
\newcommand{\tikzmark}[2]{\tikz[overlay,remember picture,
  baseline=(#1.base)] \node (#1) {#2};}
\newcommand{\Highlight}[1][submatrix]{%
    \tikz[overlay,remember picture]{
    \node[highlight,fit=(left.north west) (right.south east)] (#1) {};}
}
\newcommand{\Highlightme}[1][]{%
    \tikz[overlay,remember picture]{
    \node[highlight,fit=(me.north west) (me.south east)] (#1) {};}
}
\title{\LARGE \bf
Brunovsky Riccati Recursion for Linear Model Predictive Control
}
\author{Shaohui Yang$^{1}$, Toshiyuki Ohtsuka$^{2}$, and Colin N. Jones$^{1}$
\thanks{This project has received funding from the European Union’s 2020 research and innovation programme under the Marie Skłodowska-Curie grant agreement No. 953348 ELO-X. Corresponding author: Shaohui Yang. }
\thanks{$^{1}$Automatic Control Laboratory, EPFL, Switzerland. 
{\tt\small shaohui.yang, colin.jones@epfl.ch} }%
\thanks{$^{2}$Department of Informatics, Graduate School of Informatics, Kyoto University, Kyoto, Japan.
        {\tt\small ohtsuka@i.kyoto-u.ac.jp}}%
}
\begin{document}

\maketitle
\thispagestyle{empty}
\pagestyle{empty}

\begin{abstract}

In almost all algorithms for Model Predictive Control (MPC), the most time-consuming step is to solve some form of Linear Quadratic (LQ) Optimal Control Problem (OCP) repeatedly. The commonly recognized best option for this is a Riccati recursion based solver, which has a time complexity of $\mathcal{O}(N(n_x^3 + n_x^2 n_u + n_x n_u^2 + n_u^3))$. In this paper, we propose a novel \textit{Brunovsky Riccati Recursion} algorithm to solve LQ OCPs for Linear Time Invariant (LTI) systems. The algorithm transforms the system into Brunovsky form, formulates a new LQ cost (and constraints, if any) in Brunovsky coordinates, performs the Riccati recursion there, and converts the solution back. Due to the sparsity (block-diagonality and zero-one pattern per block) of Brunovsky form and the data parallelism introduced in the cost, constraints, and solution transformations, the time complexity of the new method is greatly reduced to $\mathcal{O}(n_x^3 + N(n_x^2 n_u + n_x n_u^2 + n_u^3))$ if $N$ threads/cores are available for parallel computing. 

\end{abstract}

\allowdisplaybreaks

\section{INTRODUCTION}
The most common form of Model Predictive Control (MPC) is the Linear-Quadratic (LQ) Optimal Control Problem (OCP). A piece of LQ OCP for Linear Time Invariant (LTI) systems with $n_x$ states and $n_u$ inputs is given by
\begin{subequations}
    \begin{align}
        \underset{x, u}{\text{min}} & \quad \sum_{k=0}^{N-1} \frac{1}{2} 
        \begin{bmatrix}
         x_k \\
         u_k
        \end{bmatrix}\tp 
        \begin{bmatrix}
        Q_k & S_k\tp \\
        S_k & R_k
        \end{bmatrix}
        \begin{bmatrix}
         x_k \\
         u_k 
        \end{bmatrix} + 
        \begin{bmatrix}
         x_k \\
         u_k 
        \end{bmatrix}\tp 
        \begin{bmatrix}
        q_k \\
        r_k
        \end{bmatrix} \notag \\
        & \quad + 
        \frac{1}{2} x_N\tp Q_N  x_N +   x_N\tp q_N \label{eq:lq-ocp-cost} \\ 
        \text{s.t. } & \quad 
         x_{k+1} = A  x_k + B  u_k + b_k \quad x_0 \text{ given} \label{eq:lq-ocp-dynamics} \\
         & \quad C x_k + D u_k \leq d \label{eq:lq-ocp-inequality}
    \end{align}
    \label{eq:lq-ocp-all}
\end{subequations}
\noindent where $Q_k \in \mathbb{R}^{n_x \times n_x}, R_k \in \mathbb{R}^{n_u \times n_u}, S_k \in \mathbb{R}^{n_u \times n_x}$, and $q_k \in \mathbb{R}^{n_x}, r_k \in \mathbb{R}^{n_u}$ define the quadratic cost function. 
$C \in \mathbb{R}^{n_i \times n_x}, D \in \mathbb{R}^{n_i \times n_u}$, and $d \in \mathbb{R}^{n_i}$ define the $n_i$ linear inequalities.
The affine equality dynamical constraints are defined for the fixed linear system $A \in \mathbb{R}^{n_x \times n_x}, B \in \mathbb{R}^{n_x \times n_u}$ and stage-wise offsets $b_k \in \mathbb{R}^{n_x}$, and the problem has a prediction horizon of length $N$. 

The OCP~\cref{eq:lq-ocp-all} can be solved by posing it as a convex Quadratic Programming (QP) problem:
\begin{subequations}
    \begin{align}
        \underset{x}{\text{min}} & \quad \frac{1}{2} x\tp H x + x\tp g \\
        \text{s.t.} & \quad E_{e}x = f_{e} \quad E_{i}x \leq f_{i} \label{eq:ocp-qp-constraints}
    \end{align}
    \label{eq:ocp-qp}
\end{subequations}
Leaving the inequalities aside for a moment, there are at least three well-established ways~\cite{kouzoupis2018recent} to tackle the resulting KKT system defined as: 
\begin{equation}
    \begin{bmatrix}
        H & E_{e}\tp \\
        E_{e} & 0
    \end{bmatrix} \begin{bmatrix}
        x \\ 
        \lambda
    \end{bmatrix} = \begin{bmatrix}
        -g \\
        f_{e}
    \end{bmatrix}
    \label{eq:kkt-system}
\end{equation}

\textit{Condensing}~\cite{bock1984multiple}: Construct a null-space basis matrix $Z$ of $E_{e}$ before solving the linear system with the reduced Hessian $Z\tp H Z$. The name `condensing' originates from the elimination of states and the resulting reduction in decision variables. The computational cost of this approach is $\mathcal{O}(N^3 n_u^3)$ to factorize $Z\tp H Z$. 

\textit{Riccati recursion}, in which one uses a backward-forward recursion to solve the LQR-like problem entailing a cost of $\mathcal{O}(N (n_x^3+ n_u^3))$. This can be viewed as directly factoring the KKT matrix (which is block penta-diagonal if reordered, due to the inter-stage dynamical equality). This approach was first combined with the Interior Point Method (IPM) in~\cite{rao1998application} and a state-of-the-art Riccati based solver was reported in~\cite{frison2016algorithms} and implemented in HPIPM~\cite{frison2020hpipm}. 

\textit{Partial condensing}~\cite{axehill2015controlling} during which one applies the condensing method to consecutive blocks of size $M$. The strategy leads to a new OCP QP of $\ceil*{\frac{N}{M}}$ and will be block-wise dense. By adjusting $M$, one can control the level of sparsity and potential parallelism in the algorithm.

As reviewed above, progress has been made from the perspective of OCP QP structure. On the other hand, structures and transformations of the linear dynamics themselves have been well-studied but have seldom been exploited for solving such problems. For instance, Kalman decomposition~\cite{kalman1963mathematical} and state-feedback pole assignment~\cite{kautsky1985robust} are classical results. Nevertheless, their algorithmic developments (e.g. staircase algorithm~\cite{varga1981numerically,boley1981computing} and deadbeat gain computation~\cite{van1984deadbeat,sugimoto1993direct}) have received far less attention. In this paper, we exploit a lesser-known transformation: Brunovsky form~\cite{brunovsky1970classification}.

There are limited past works on taking advantage of time invariance when solving an OCP QP. The \textit{sparse condensing} approach injects sparsity to the reduced Hessian, favored by tailored linear solvers. Here we review three instances:

Banded null-space bases are assembled in~\cite{dang2017banded,yang2019system} to enforce bandedness of $Z\tp H Z$. The former one invokes the Turnback algorithm~\cite{berry1985algorithm} to a general system $(A, B)$ with no guarantee on bandwidth and effectiveness. Proofs are provided by the latter, which uses a Kalman decomposition of $(A, B)$ and constructs a two-sided deadbeat response using the controllable part of the system to fill in a custom $Z$. 

\cite{jerez2012sparse} formulates a banded reduced Hessian directly by finding a deadbeat gain $F$ to make $A+BF$ nilpotent with index $\mu$. Control $u_k$ thus has influence to at most $x_{k+\mu}$, assuming null-controllablity of the system. 

However, if inequality constraints $E_{i}, f_{i}$ are in place, all of the three will be only preferred by methods such as ADMM~\cite{o2013splitting} with fixed Hessian. In the more widely-used and effective interior point method, the Hessian in~\cref{eq:kkt-system} will be influenced by slacks and Lagrangian multipliers and becomes $H + \Delta H$. Re-computation of $Z\tp \Delta H Z$ is mandatory, even though $Z$ is fixed and the banded structure can be exploited. 

In this paper, we focus on the \textit{Riccati recursion} method because it incorporates with the interior point method well. Our contributions are two-fold: 
\begin{enumerate}[label=\textcolor{brown}{\arabic*.}]
     \item We propose to transform any $(A,B)$ first to controllable form and then to Brunovsky form and perform a Riccati recursion in these new coordinates. The transformations of costs, constraints, and solutions are done in parallel and the sparsity of Brunovsky form and the data parallelism together contribute to a Riccati solver with fastest big-O speed reported in the literature for LTI systems.
    \item We present a new perspective to transform any controllable system to Brunovsky form. 
\end{enumerate}

Notation: $I_{n} \in \mathbb{R}^{n \times n}$ denotes an identity matrix. $0_{m \times n}$ denotes a zero matrix. $\operatorname{blkdiag}$ is a short hand for a block diagonal matrix. $P \succ 0$ denotes positive definiteness of matrix $P$. $P \succeq 0$ denotes positive semi-definiteness. $[P]_{i, j}$ either represents a matrix block $P_{ij}$ or a scalar entry $p_{ij}$. 

\section{PRELIMINARIES}
\subsection{Riccati recursion for LQ OCP}
\vspace{-0.5em}
\begin{algorithm}[ht]
\begin{spacing}{1}
\caption{Riccati recursion to solve~\Crefrange{eq:lq-ocp-cost}{eq:lq-ocp-dynamics}.}
\begin{algorithmic}[1]
\Require $x_0, \{Q_k, S_k, R_k, q_k, r_k, b_k\}, A, B$
\State $P_N \gets Q_N$, $p_N \gets q_N$
\For{$k = N - 1 \rightarrow 0$}
\State $R_{e, k} \gets R_k + B\tp P_{k+1} B$
\State $K_k \gets -R_{e, k}\iv (S_k + B\tp P_{k+1} A)$
\State $P_k \gets Q_k + A\tp P_{k+1} A - K_k\tp R_{e, k} K_k$ \label{alg1:line:APA}
\State $k_k \gets  -R_{e, k}\iv (r_k + B\tp (P_{k+1} b_k + p_{k+1})) $
\State $p_k \gets q_k + A\tp (P_{k+1} b_k + p_{k+1}) - K_k\tp R_{e, k} k_k$
\EndFor
\For{$k = 0 \rightarrow N-1$}
\State $u_k^* \gets K_k x_k + k_k $ and  $x_{k+1}^* \gets A x_k + B  u_k + b_k$
\EndFor
\Ensure $\{x_k^*, u_k^*\}$
\end{algorithmic}
\label{alg1:riccati-classic}
\end{spacing}
\end{algorithm}
\vspace{-0.5em}
The classical Riccati recursion (\cref{alg1:riccati-classic}) solves~\Crefrange{eq:lq-ocp-cost}{eq:lq-ocp-dynamics} efficiently, with 
different variants and respective per-iteration floating point operations counts (up to cubic terms)
\begin{equation}
    \alpha n_x^3 + \beta n_x^2 n_u + \gamma n_x n_u^2 + \delta n_u^3
    \label{eq:flop-count-riccati}
\end{equation}
\noindent summarized in~\cref{table:riccati-variants}~\cite{frison2016algorithms}. 
\begin{table}[htbp]
\caption{~\cref{alg1:riccati-classic} with different assumptions and structure exploitation~\cite{frison2016algorithms}. }\label{table:riccati-variants}
\centering
\scalebox{0.95}{
\begin{tabular}{ |c|c|c| } 
 \hline
 Type & \makecell{Complexity~\cref{eq:flop-count-riccati} \\ $\delta=\nicefrac{1}{3}$} & Note \\ 
 \hline
 General & \makecell{$\alpha=4 $ \\ $\beta=6, \gamma=3$} & Cholesky factorize $R_{e, k}$\tablefootnote{Requires $R_k \succ 0, \forall k$, which, together with $\begin{bmatrix}
     Q_k & S_k\tp \\
     S_k & R_k
 \end{bmatrix} \succeq 0, \forall k$ and $Q_N \succeq 0$, are sufficient for the existence and uniqueness of~\Crefrange{eq:lq-ocp-cost}{eq:lq-ocp-dynamics} and thus are commonly assumed. } \\ 
 \hline 
 Symmetry & \makecell{$\alpha=3 $ \\ $\beta=5, \gamma=3$} & Exploit symmetry of $P_k$ \\ 
 \hline 
 Square-root & \makecell{$\alpha=\nicefrac{7}{3} $ \\ $\beta=4, \gamma=2$} & Cholesky factorize $P_k$\tablefootnote{Requires $P_k \succ 0, \forall k$. A sufficient condition for this is that $\begin{bmatrix}
     Q_k & S_k\tp \\
     S_k & R_k
 \end{bmatrix} \succ 0, \forall k$ and $P_N \succ 0$, which is quite common in practice. } \\ 
 \hline
\end{tabular}
}
\vspace{-1em}
\end{table}
\cref{alg1:riccati-classic} is applicable to time-varying systems such as $(A_k, B_k)$, but as will be shown in later sections, a fixed $(A, B)$ introduces a significant speed-up if proper transformations are applied before the recursion. 

\subsection{Transformations of linear systems}
\subsubsection[State transformations]{State transformations\footnote{Explanation on super/sub-scripts: $kd \rightarrow$ Kalman decomposition. $co \rightarrow$ controllable. $uc \rightarrow$ uncontrollable. $ca \rightarrow$ controllable canonical.}}
Given an arbitrary LTI system $(A, B)$, the Kalman decomposition~\cite{kalman1963mathematical} separates the controllable and uncontrolllable parts of the system with a state transformation $T_{kd} \in \mathbb{R}^{n_x \times n_x}$:
\begin{equation}
     \begin{bmatrix}
        A_{co} & A_{12} \\
        0 & A_{uc}
    \end{bmatrix} = T_{kd} A T_{kd}\iv \qquad 
    \begin{bmatrix}
        B_{co} \\ 0
    \end{bmatrix} = T_{kd} B
    \label{eq:kalman-decomposition-controllable}
\end{equation} 
where $A_{co} \in \mathbb{R}^{n_x^{c} \times n_x^{c}}, B_{co} \in \mathbb{R}^{n_x^{c} \times n_u}$ characterizes the $n_x^{c}$ controllable portion of the states and $A_{uc} \in \mathbb{R}^{n_x^{uc} \times n_x^{uc}}$ represents the self-evolving $n_x^{uc}$ uncontrollable part. 

The non-singular transformation $T_{kd}$ is not unique and can be developed in different ways. The staircase algorithm~\cite{varga1981numerically,boley1981computing} (\texttt{ctrbf} in Matlab) is one numerically reliable option that returns a well-conditioned unitary $T_{kd}$. The resulting $\begin{bmatrix}
    B_{co} & A_{co}
\end{bmatrix}$ is a staircase-like block Hessenberg matrix. 

Starting from the controllable pair $(A_{co}, B_{co})$, constructive methods~\cite{antsaklis1997linear, datta1977algorithm} are available to further transform the system to controllable canonical (companion) form with $T_{ca} \in \mathbb{R}^{n_x^{c} \times n_x^{c}}$. $A_{ca} = T_{ca} A_{co} T_{ca}\iv, B_{ca} = T_{ca} B_{co}$. The controllability indices $\{\mu_i\}_{i=1}^{n_u}, \sum_{i=1}^{n_u} \mu_i = n_x$ characterize the sparsity pattern of $A_{ca}, B_{ca}$ as follows: 
\begin{equation}
\begin{aligned}
    [A_{ca}]_{i, i} &= \begin{bmatrix}
        0_{(\mu_i-1) \times 1} & I_{\mu_i-1} \\
        \star & \star_{1 \times (\mu_i - 1)}
    \end{bmatrix} \, 
     [A_{ca}]_{i, j} = \begin{bmatrix}
        0_{(\mu_i-1) \times \mu_j}  \\
        \star_{1 \times \mu_j}
    \end{bmatrix}
    \\
    [B_{ca}]_{i, i} &= 
    \begin{bmatrix}
        0_{(\mu_i-1) \times 1} \\
        1
    \end{bmatrix} \quad 
    [B_{ca}]_{i, j} = \begin{bmatrix}
        0_{(\mu_i-1) \times 1} \\
        \star
    \end{bmatrix} \quad i < j
\end{aligned}
\label{eq:controllable-canonical-form}
\end{equation}
where $1 \leq i, j \leq n_u$, $[A_{ca}]_{i,j} \in \mathbb{R}^{\mu_i \times \mu_j}, [B_{ca}]_{i,j} \in \mathbb{R}^{\mu_i \times 1}$ and $\star$ denotes non-fixed entries with proper sizes~\cite{antsaklis1997linear}. $[B_{ca}]_{i,j} = 0_{\mu_i \times 1}, i > j$. Except the zeros and identity blocks in~\cref{eq:controllable-canonical-form} which are favored, $A_{ca}$ still has $n_x n_u$ dense entries out of $n_x^2$ and $B_{ca}$ has $\frac{n_u (n_u+1)}{2}$ out of $n_x n_u$. The method proposed in~\cite{datta1977algorithm} synthesizes $T_{ca}$ to achieve a minimum number of non-fixed parameters for $A_{ca}$, which still needs $n_x + n_u(n_u - 1)$ out of $n_x^2$ in the best case. As demonstrated in the next section, transformation to Brunovsky form solves the problem at its root. 

\subsubsection{Feedback transformations}

It is common knowledge that the eigenvalues of the closed-loop matrix $A_{co} + B_{co} F$ can be arbitrarily placed by a properly designed feedback gain $F \in \mathbb{R}^{n_u \times n_x^{c}}$ for a controllable pair $(A_{co}, B_{co})$~\cite{kautsky1985robust}. $F$ is often referred as a feedback transformation. 

An interesting instance of eigenvalue placement is to introduce nilpotency (placing all eigenvalues on the origin), i.e., $(A_{co} + B_{co} F_{db})^{\mu} = 0$ where the controllability index $\mu = \max_{i} \mu_i$ coincides with the index for nilpotency. Such a design is called \textit{deadbeat} control in the literature. Numerically stable algorithms that formulate a deadbeat feedback gain $F_{db}$ using staircase $(A_{co}, B_{co})$ are available~\cite{van1984deadbeat,sugimoto1993direct}. 

\section{LTI SYSTEM TO BRUNOVSKY FORM}
\label{sec:lti-to-brunovsky}

First proposed in~\cite{brunovsky1970classification}, the Brunovsky form is characterized by a set of controllability indices $\{\mu_i\}_{i=1}^{n_u}$: 
\begin{subequations}
\begin{align}
    A_{b} &= \operatorname{blkdiag}(A_i) \quad 
    A_{i} = \begin{bmatrix}
        0_{(\mu_i-1) \times 1} & I_{\mu_i-1} \\
        0 & 0_{1 \times (\mu_i-1)}
    \end{bmatrix} \label{eq:brunovsky-form-A}
    \\
    B_{b} &= \operatorname{blkdiag}(B_i) \quad 
    B_i = \begin{bmatrix}
        0_{(\mu_i-1) \times 1} \\
        1
    \end{bmatrix} \label{eq:brunovsky-form-B}
\end{align}
\label{eq:brunovsky-form}
\end{subequations}
The linear system $(A_b, B_b)$ in~\cref{eq:brunovsky-form} is composed of $n_u$ independent chain-of-integrator dynamics, each of size $\mu_i$, with an input to the $i$th derivative of corresponding state. It is of similar structure to the canonical form in~\cref{eq:controllable-canonical-form} but with zero off-diagonal blocks ($[A]_{i,j} = [B]_{i,j} = 0, \forall i \neq j$) and zero last rows of the diagonal blocks $A_i, B_i$ (all $\star$ are gone). 

\cite{brunovsky1970classification} proves that any controllable pair $(A_{co}, B_{co})$ can be transformed to~\cref{eq:brunovsky-form} with $T, F, G$ of proper sizes and $T, G$ being nonsingular (named as \textit{feedback equivalence})
\begin{equation}
    A_b = T (A_{co} + B_{co}F) T\iv \qquad B_b = T B_{co} G
    \label{eq:transformation-brunovsky}
\end{equation} 

Existing methods construct $T$ first and then $F, G$.
\begin{enumerate}[label=\textcolor{brown}{\arabic*.}]
    \item\label{enu:brunovsky-method-hessenburg}\cite{ford1984simple} proposed to transform $(A_{co}, B_{co})$ to a block triangular system first (with $T$) and then to Brunovsky form with $u = Fx + Gv$ where $v \in \mathbb{R}^{n_u}$ denotes the new input. However, the algorithm lacks a correctness proof of and contains many typos in pseudo-code.
    \item\label{enu:brunovsky-method-canonical}It is straightforward to obtain Brunovsky form from the controllable canonical form~\cref{eq:controllable-canonical-form} by eliminating the last rows of $[A_{ca}]_{i,j}, [B_{ca}]_{i,j}$ using $u = Fx + Gv$. Different $T_{ca}$ are available to transform $(A_{co}, B_{co})$ to $(A_{ca}, B_{ca})$.
\end{enumerate}

Here we provide the following new perspective to compute $F$ first, then $T$ to get $A_b$ in~\cref{eq:brunovsky-form-A}. With the super-diagonal entry of $A_i$ being $1$, $A_b$ can be viewed as the Jordan normal form of $A_{co} + B_{co}F$ (up to the order of the controllability indices $\mu_i$), with $T$ being the similarity transformation. Since $A_b$ is nilpotent, i.e., $A_b^{\mu} = 0$, so is $A_{co} + B_{co}F$. The feedback gain $F$ achieves deadbeat control. Such a $F_{db}$ can be obtained from staircase controllable $(A_{co}, B_{co})$~\cite{sugimoto1993direct}. Then $T_{jo}$ is devised to transform the closed-loop matrix to $A_b$\footnote{Explanation on subscript: $jo \rightarrow$ Jordan normal form. $db \rightarrow$ deadbeat gain. From now on, $T, F$ in~\cref{eq:transformation-brunovsky} will be annotated as $T_{jo}, F_{db}$ to highlight their intrinsicalities regardless of how they are constructed. }. However, as demonstrated by the following example, such strategy is insufficient to guarantee the existence of $G$ in~\cref{eq:transformation-brunovsky}. 
\begin{example}
    Consider a controllable pair $A_{co} = \begin{bsmallmatrix}
        0 & -1 \\
        1 & -1
    \end{bsmallmatrix}, B_{co} = \begin{bsmallmatrix}
        1 \\ 
        0
    \end{bsmallmatrix}$. It is easy to verify that the only deadbeat gain is $F_{db} = \begin{bsmallmatrix}
        1 & 0
    \end{bsmallmatrix}$ which makes $A_{co} + B_{co}F_{db} = $\raisebox{-0.1em}{$\begin{bsmallmatrix}
        1 & -1 \\
        1 & -1
    \end{bsmallmatrix}$}. All transformations that results in $T_{jo} (A_{co} + B_{co}F_{db}) T_{jo}\iv = \begin{bsmallmatrix}
        0 & 1 \\
        0 & 0
    \end{bsmallmatrix} = A_b$, which is the only Jordan normal nilpotent $2 \times 2$ matrix, are of the form $T_{jo} = \theta \begin{bsmallmatrix}
        \frac{1}{2} & -\frac{1}{2} \\
        1 & 1
    \end{bsmallmatrix}, \theta \neq 0 $. As a result, $T_{jo} B_{co} = \theta \begin{bsmallmatrix}
        \frac{1}{2} \\
        1
    \end{bsmallmatrix}$. Since $n_x=2, n_u=1$, the only possible $B_b$ is $\begin{bsmallmatrix}
        0 \\
        1
    \end{bsmallmatrix}$. Hence, it is impossible to find a $G \in \mathbb{R}$ s.t. $T_{jo} B_{co} G = B_b$. If $n_u = n_x$, invertibility of $T_{jo} B_{co}$ guarantees the existence of non-singular $G$.  
\end{example}

\section{BRUNOVSKY RICCATI RECURSION}
In this section, we present the main result of the paper: transforming an LTI system to Brunovsky form accelerates the solving of an LQ OCP. 
\subsection{General \texorpdfstring{$(A, B)$}{Lg} to controllable  \texorpdfstring{$(A_{co}, B_{co})$}{Lg}}
The motivation of this subsection is "stop wasting": not all LTI systems are fully controllable. If not, doing Riccati recursion with the uncontrollable part is unnecessary. 

Starting from an arbitrary pair $(A, B)$, we first Kalman decompose the system into~\cref{eq:kalman-decomposition-controllable} with $T_{kd}$. Denote the new state representation as $\begin{bmatrix}
    \tpp{x^c} & \tpp{x^{uc}}
\end{bmatrix}\tp = T_{kd} x$. The dynamics~\cref{eq:lq-ocp-dynamics} is equivalently transformed to the following: 
\begin{subequations}
\begin{align}
    x_{k+1}^{c} &= A_{co} x_k^c + A_{12} x_k^{uc} + B_{co} u_k^c + b_k^c \label{eq:controllable-dynamics} \\
    x_{k+1}^{uc} &= A_{uc} x_k^{uc} + b_k ^{uc} \label{eq:uncontrollable-dynamics}
\end{align}
\end{subequations}
where $x_k^c \in \mathbb{R}^{n_x^{c}}$ denotes the controllable part of the states affected by input $u_k = u_k^c$ and $x_k^{uc} \in \mathbb{R}^{n_x^{uc}}$ denotes the uncontrollable part that evolves on its own. $u_k^c$ is introduced only for clearer notation. The new offsets are defined as: 
\begin{equation}
    \begin{bmatrix}
    \tpp{b_k^c} & \tpp{b_k^{uc}}
    \end{bmatrix}\tp = T_{kd} b_k
    \label{eq:new-offset-controllable}
\end{equation}
Naturally, instead of solving an OCP of size $(n_x, n_u)$ as in~\Crefrange{eq:lq-ocp-cost}{eq:lq-ocp-dynamics}, it is advisable to solve an equivalent one but (potentially) of smaller size $(n_x^{c}, n_u)$:
\begin{subequations}
    \begin{align}
    \underset{x^c, u^c}{\text{min}} & \quad \sum_{k=0}^{N-1} \frac{1}{2} 
    \begin{bmatrix}
     x_k^c \\
     u_k^c
    \end{bmatrix}\tp 
    \begin{bmatrix}
    Q_k^c & \tpp{S_k^c} \\
    S_k^c & R_k
    \end{bmatrix}
    \begin{bmatrix}
     x_k^c \\
     u_k^c
    \end{bmatrix} + 
    \begin{bmatrix}
     x_k^c \\
     u_k^c
    \end{bmatrix}\tp 
    \begin{bmatrix}
    q_k^c \\
    r_k
    \end{bmatrix} \notag \\
    & \quad + 
    \frac{1}{2} \tpp{x_N^c} Q_N^c  x_N +   \tpp{x_N^c} q_N^c \\ 
    \text{s.t. } & \quad 
     x_{k+1}^c = A_{co} x_k^c + B_{co} u_k^c + b_k^{co} \quad x_0^c \text{ given}
\end{align}
\label{eq:lqr-controllable}
\end{subequations}
The new terms in~\cref{eq:lqr-controllable} are defined as follows: 
\begin{subequations}
    \begin{align}
        Q_k^c &\in \mathbb{R}^{n_x^{c} \times n_x^{c}} & \begin{bmatrix}
            Q_k^c & \star \\
            \star & \star
        \end{bmatrix} &= T_{kd}\itp Q_k  T_{kd}\iv \label{eq:new-controllable-cost-Q} \\
        S_k^c &\in \mathbb{R}^{n_u \times n_x^{c}} & \begin{bmatrix}
            \tpp{S_k^c} & \star
        \end{bmatrix}\tp &= S_k T_{kd}\iv \\
        q_k^c &\in \mathbb{R}^{n_x^{c}} & \begin{bmatrix}
            \tpp{q_k^c} & \star
        \end{bmatrix}\tp &= T_{kd}\itp q_k \\
        b_k^{co} &\in \mathbb{R}^{n_x^{c}} & b_k^{co} &= b_k^c + A_{12}x_k^{uc} \label{eq:new-controllable-cost-offset} \\
        x_0^c &\in \mathbb{R}^{n_x^{c}} & \begin{bmatrix}
            \tpp{x_0^c} & 
            \tpp{x_0^{uc}}
        \end{bmatrix}\tp &= T_{kd}x_0 \label{eq:new-controllable-cost-initial-state}
    \end{align}%
    \label{eq:new-controllable-cost}%
\end{subequations}%
\noindent where $\star$ denotes matrix or vector blocks of proper sizes but of no use. $R_k$ and $r_k$ remain unchanged compared with~\cref{eq:lq-ocp-cost}. 

\begin{proposition}
    Assume the uncontrollable trajectory $\{x_k^{uc}\}$ is available. The solution to~\Crefrange{eq:lq-ocp-cost}{eq:lq-ocp-dynamics} $\{x_k^{*}, u_k^*\}$ can be obtained from the solution to~\cref{eq:lqr-controllable} $\{x_k^{c*}, u_k^{c*}\}$ via: 
    \begin{equation}
        x_k^* = T_{kd}\iv \begin{bmatrix}
        \tpp{x_k^{c*}} &
        \tpp{x_k^{uc}}
    \end{bmatrix}\tp \qquad u_k^* = u_k^{c*}
    \label{eq:solution-recover-controllable}
    \end{equation}
\end{proposition}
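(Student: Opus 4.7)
The plan is to substitute the state transformation $T_{kd}$ from~\cref{eq:kalman-decomposition-controllable} into the original problem~\Crefrange{eq:lq-ocp-cost}{eq:lq-ocp-dynamics}, show that the transformed objective decomposes additively into the cost of~\cref{eq:lqr-controllable} plus a term depending only on the autonomous trajectory $\{x_k^{uc}\}$, and conclude that the minimizers of the two problems correspond via~\cref{eq:solution-recover-controllable}. The key observation is that once $\{x_k^{uc}\}$ is fixed (which it is, by the uncontrollable block being autonomous), every appearance of $x^{uc}$ in the objective is an additive constant in the decision variables, and its only appearance in the dynamics for $x^c$ is an affine forcing term that has already been absorbed into $b_k^{co}$.

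First, I would perform the change of variables $\begin{bmatrix}\tpp{x_k^c} & \tpp{x_k^{uc}}\end{bmatrix}\tp = T_{kd} x_k$ and set $u_k^c = u_k$. Using~\cref{eq:kalman-decomposition-controllable,eq:new-offset-controllable}, the dynamics~\cref{eq:lq-ocp-dynamics} split into the block-triangular pair~\cref{eq:controllable-dynamics}--\cref{eq:uncontrollable-dynamics}. Since~\cref{eq:uncontrollable-dynamics} involves neither $x^c$ nor $u^c$, it is uniquely solved by forward simulation from $x_0^{uc}$ fixed in~\cref{eq:new-controllable-cost-initial-state}, producing the assumed sequence $\{x_k^{uc}\}$. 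Substituting this sequence into~\cref{eq:controllable-dynamics} and invoking~\cref{eq:new-controllable-cost-offset} reproduces exactly the dynamics of~\cref{eq:lqr-controllable}, so the feasible sets in $(x^c,u^c)$ coincide.

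Next, I would substitute the change of variables into the cost~\cref{eq:lq-ocp-cost}. The quadratic stage $x_k\tp Q_k x_k$ becomes $\begin{bmatrix}\tpp{x_k^c} & \tpp{x_k^{uc}}\end{bmatrix}\,T_{kd}\itp Q_k T_{kd}\iv\,\begin{bmatrix}\tpp{x_k^c} & \tpp{x_k^{uc}}\end{bmatrix}\tp$, and by the block definition~\cref{eq:new-controllable-cost-Q} the $(1,1)$ block contributes $\tpp{x_k^c} Q_k^c x_k^c$ while the remaining blocks contribute terms in $x_k^{uc}$ alone. The cross term $u_k\tp S_k x_k$ and the linear term $x_k\tp q_k$ decompose analogously via~\cref{eq:new-controllable-cost}, yielding $u_k\tp S_k^c x_k^c + \tpp{x_k^c} q_k^c$ plus further $x_k^{uc}$-only terms. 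The pure-input contributions $R_k, r_k$ are unchanged because $u_k^c = u_k$, and the terminal stage is handled identically. Collecting, the transformed objective reads $J_{\mathrm{orig}} = J_{\mathrm{ctrl}}(x^c, u^c) + C(\{x_k^{uc}\})$, where $J_{\mathrm{ctrl}}$ is precisely the objective of~\cref{eq:lqr-controllable} and $C$ is a constant in the decision variables.

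Finally, because both the feasible set and the variable part of the objective coincide with those of~\cref{eq:lqr-controllable}, the two problems share the same minimizers $(x_k^{c*}, u_k^{c*})$, and inverting $T_{kd}$ together with $u_k^* = u_k^{c*}$ yields~\cref{eq:solution-recover-controllable}. No real obstacle arises beyond careful bookkeeping: the only subtlety is to verify that the $\star$ entries appearing in~\cref{eq:new-controllable-cost} never couple back into the coefficients of $x^c$ or $u$, and that the sole $x^{uc}$-dependence in~\cref{eq:controllable-dynamics} is the known affine forcing $A_{12} x_k^{uc}$, already folded into $b_k^{co}$.
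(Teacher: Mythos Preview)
Your overall strategy---substitute the coordinate change, split the dynamics into~\cref{eq:controllable-dynamics}--\cref{eq:uncontrollable-dynamics}, decompose the cost, and read off the equivalence---is exactly what the paper's one-line proof invokes. However, your cost decomposition contains a concrete error. When you assert that beyond the $(1,1)$ block ``the remaining blocks contribute terms in $x_k^{uc}$ alone'', this is false: writing $T_{kd}\itp Q_k T_{kd}\iv = \begin{bsmallmatrix} Q_k^c & Q_{12}\\ Q_{12}\tp & Q_{22}\end{bsmallmatrix}$, the off-diagonal blocks produce $2\,\tpp{x_k^c} Q_{12} x_k^{uc}$, which is \emph{linear in the decision variable} $x_k^c$ once $x_k^{uc}$ is frozen, not an additive constant. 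Likewise, if $S_k T_{kd}\iv = \begin{bmatrix} S_k^c & S_k^{uc}\end{bmatrix}$, the second block yields $u_k\tp S_k^{uc} x_k^{uc}$, which is linear in $u_k$. Hence the $\star$-entries in~\cref{eq:new-controllable-cost} \emph{do} couple back into the linear coefficients of $x^c$ and $u$; your closing ``only subtlety'' check would fail, not succeed.

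The fix is straightforward: absorb $Q_{12} x_k^{uc}$ into the linear state weight and $S_k^{uc} x_k^{uc}$ into the linear input weight of the reduced problem, after which your argument goes through verbatim. The paper's terse proof does not make this mistake explicit because it never writes out the decomposition, but its stated definitions of $q_k^c$ and the claim that $r_k$ is unchanged share the same omission; so you are following the paper's route faithfully, only you have surfaced a gap that the paper glosses over rather than introducing a new one.
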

\begin{proof}
    The result follows directly from the coordinate transformations~\cref{eq:kalman-decomposition-controllable,eq:solution-recover-controllable}, the isolation between controllable/uncontrollable trajectories~\Crefrange{eq:controllable-dynamics}{eq:uncontrollable-dynamics}, and the formulations for new offset and cost coefficients in~\cref{eq:new-offset-controllable,eq:new-controllable-cost}. 
\end{proof}

\subsection{Controllable \texorpdfstring{$(A_{co}, B_{co})$}{Lg} to Brunovsky \texorpdfstring{$(A_{b}, B_{b})$}{Lg}}
As pointed out in~\cite{frison2016algorithms}, the most computationally heavy part of~\cref{alg1:riccati-classic} is $A\tp P_{k+1} A$ in~\cref{alg1:line:APA} which costs $\alpha n_x^3$ FLOPs. In the following, the subscript of the cost-to-go matrix $P_{k+1}$ will be dropped for convenience. \cref{table:riccati-variants} states that general matrix-matrix multiplication (a so-called \textit{gemm} operation) has $\alpha = 4$. Symmetry of $P$ means $P = \Pi + \Pi\tp$ with triangular matrix $\Pi$, followed by a triangular-matrix multiplication (named \textit{trmm}) on $\Pi A$ and a \textit{gemm} on $A\tp (\Pi A)$, thus makes $\alpha = 3$. If all $P_k$ are assumed to be positive definite, the Cholesky facotization (\textit{potrf}) will return $P = L L\tp$. Followed by \textit{trmm} on $L\tp A$ and symmetric rank-k update (\textit{syrk}) on $(L\tp A)\tp (L\tp A)$, $\alpha = \nicefrac{7}{3}$ can be achieved, which is the current best practice\footnote{\textit{gemm} $\rightarrow$ general matrix-matrix multiplication. \textit{trmm} $\rightarrow$ triangular matrix-matrix multiplication. \textit{potrf} $\rightarrow$ Cholesky decomposition (positive definite matrix triangular factorization). \textit{syrk} $\rightarrow$ symmetric rank-k update. All abbreviations follow the standard BLAS~\cite{blackford2002updated} Level-3 API. }\fnsep\footnote{$\beta, \gamma$ in~\cref{eq:flop-count-riccati} are also reduced in different settings, but they play a secondary role due to $n_x \gg n_u$. See more detailed explanation in~\cite{frison2016algorithms}. }. 

The Brunovsky form~\cref{eq:brunovsky-form} avoids all aforementioned calculations, whose sparsity can be interpreted in two levels:
\begin{enumerate}[label=\textcolor{brown}{\arabic*.}]
    \item Block-diagonality of $(A_b, B_b)$.
    \item Zero-one patterns per block. $A_i$ of $A_b$ has an sole identity block on the top-right corner, so it acts like \textit{shifting} in \textit{gemm}. $B_i$ of $B_b$ is a unit vector with size $\mu_i$, so it acts like \textit{selection} of rows or columns in \textit{gemm}. 
\end{enumerate}
Due to block-diagonality, $A_b\tp P A_b$, $B_b\tp P B_b$, and $B_b\tp P A_b$ can be assembled block-wise in parallel. Due to zero-one patterns, each block is constructed by \textbf{selective memory copy-paste}, which is also parallelizable. \textbf{No floating point operations} are necessary. The dominating $\alpha n_x^3$ FLOPs in~\cref{eq:flop-count-riccati} reduces to $n_x^2$ memory copy in time. 
The coefficients $\beta, \gamma$ shrink with the same reason. \cref{thm:sparsity-copy} provides the closed-form copy-paste formula. 

\begin{theorem}
\label{thm:sparsity-copy}
    Denote $X, Y \in \{A, B\}$, i.e., $X, Y$ represent $A, B$ interchangeably. Denote $P_{ij} = [P]_{i, j} \in \mathbb{R}^{\mu_i \times \mu_j}$, i.e., the cost-to-go matrix $P$ is viewed as a $n_u \times n_u$ block matrix. The following formula returns $A_b\tp P A_b$, $B_b\tp P B_b$, and $B_b\tp P A_b$ making use of block-diagonality: 
    \begin{equation}
        X_i\tp P_{ij} Y_j = [X_b\tp P Y_b]_{i, j}
        \label{eq:sparsity-block}
    \end{equation} 
    As a result of the zero-one pattern, each block in~\cref{eq:sparsity-block} is copied from $P$ following the rules below: 
    \begin{subequations}
    \begin{align}
        [A_i\tp P_{ij} A_j]_{m, n} &= [P_{ij}]_{m-1, n-1} & m, n \geq 2 \\
        [B_i\tp P_{ij} A_j]_{1, n} &= [P_{ij}]_{\mu_i, n-1} & n \geq 2 \\
        B_i\tp P_{ij} B_j &= [P_{ij}]_{\mu_i, \mu_j} 
    \end{align}
    \label{eq:sparsity-pattern}
    \end{subequations} 
    Entries are zero when $m=1$ or $n=1$. 
\end{theorem}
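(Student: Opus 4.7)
The plan is to verify the theorem in two clean stages mirroring the two levels of sparsity described in the text: first the block-diagonality of $A_b$ and $B_b$, then the zero-one pattern within each block.

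First I would prove the block identity \cref{eq:sparsity-block} by direct block multiplication. Since $A_b = \operatorname{blkdiag}(A_i)$ and $B_b = \operatorname{blkdiag}(B_i)$, for any $X, Y \in \{A, B\}$ the matrix $X_b$ (respectively $Y_b$) has $(k,l)$ block equal to $X_k$ when $k=l$ and zero otherwise. Multiplying out $X_b\tp P Y_b$ block-by-block, the only surviving terms in $[X_b\tp P Y_b]_{i,j} = \sum_{k,l} [X_b\tp]_{i,k} P_{kl} [Y_b]_{l,j}$ are those with $k=i$ and $l=j$, giving $X_i\tp P_{ij} Y_j$. This is purely bookkeeping and requires no structural assumption beyond block-diagonality.

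Next I would handle the zero-one patterns in \cref{eq:sparsity-pattern}. Writing out $A_i$ as in \cref{eq:brunovsky-form-A}, its transpose $A_i\tp$ is the subdiagonal shift matrix whose $(m,k)$ entry equals $1$ when $k = m-1$ (for $m \geq 2$) and $0$ otherwise. Hence left-multiplication by $A_i\tp$ shifts rows downward by one and zeros out the first row: $(A_i\tp P_{ij})_{m,n} = [P_{ij}]_{m-1, n}$ for $m \geq 2$. Symmetrically, right-multiplication by $A_j$ shifts columns rightward by one and zeros out the first column. Composing the two effects immediately yields $[A_i\tp P_{ij} A_j]_{m,n} = [P_{ij}]_{m-1, n-1}$ for $m,n \geq 2$, with the boundary rows and columns vanishing. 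For $B_i$, which is the $\mu_i$-th canonical basis vector in $\mathbb{R}^{\mu_i}$, the product $B_i\tp P_{ij}$ selects the $\mu_i$-th row of $P_{ij}$, and $P_{ij} B_j$ selects its $\mu_j$-th column; this yields the remaining two formulas, and combining selection with shifting gives the $B_i\tp P_{ij} A_j$ identity.

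There is essentially no obstacle here beyond careful index bookkeeping. The trickiest bit, if any, is keeping the row/column boundary conditions straight when $m = 1$ or $n = 1$: the top row and leftmost column introduced by $A_i\tp$ and $A_j$ respectively must be explicitly identified as zero, so I would state the vanishing conditions alongside the non-zero formulas. Once the shift-versus-selection interpretation of $A_i$ and $B_i$ is stated, the three formulas in \cref{eq:sparsity-pattern} follow by one line of computation each, and the theorem is complete.
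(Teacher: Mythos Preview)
Your proposal is correct and is essentially the same approach the paper takes: the paper's own proof is a single sentence stating that the result follows directly from plain matrix-matrix multiplication exploiting the block-diagonality and zero-one pattern of $(A_b, B_b)$, and you are simply spelling out that computation in detail. Your shift/selection interpretation of $A_i\tp$ and $B_i$ matches the paper's informal description (and its worked example) exactly.
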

\begin{proof}
    The result follows directly from plain matrix-matrix multiplication (\textit{gemm}) because of the block-diagonality and zero-one pattern per block of $(A_b, B_b)$ in~\cref{eq:brunovsky-form}. 
\end{proof}
\noindent The following example elaborates~\cref{thm:sparsity-copy} intuitively. 
\begin{example}
    Here we demonstrate the benefits of zero-one pattern. Let $\mu_i = 3, \mu_j = 4$, $p_{ij}$ denotes the entries of $P_{ij} \in \mathbb{R}^{3 \times 4}$. Three cases of~\cref{eq:sparsity-pattern} are conducted: 
    \begin{align} \label{eq:sparsity-example}
        A_i\tp P_{ij} A_j &= \bmatrixAdjustSize{
            0 & 0 & 0 \\
            1 & 0 & 0 \\
            0 & 1 & 0 \\
        } \begin{bmatrix}
            \tikzmark{left}{$p_{11}$} & p_{12} & p_{13} & p_{14} \\
            p_{21} & p_{22} & \tikzmark{right}{$p_{23}$} & p_{24} \\
            p_{31} & p_{32} & p_{33} & p_{34} \Highlight[first] \\
        \end{bmatrix} \bmatrixAdjustSize{
            0 & 1 & 0 & 0 \\
            0 & 0 & 1 & 0 \\
            0 & 0 & 0 & 1 \\
            0 & 0 & 0 & 0 \\
        } \notag \\
        &= \begin{bmatrix}
            0 & 0 & 0 & 0 \\
            0 & \tikzmark{left}{$p_{11}$} & p_{12} & p_{13} \\
            0 & p_{21} & p_{22} & \tikzmark{right}{$p_{23}$} \\
        \end{bmatrix} \Highlight[second] \\
        B_i\tp P_{ij} A_j &= \begin{bmatrix}
            0 \\ 0 \\ 1
        \end{bmatrix}\tp \begin{bmatrix}
            p_{11} & p_{12} & p_{13} & p_{14} \\
            p_{21} & p_{22} & p_{23} & p_{24} \\
            p_{31} & \tikzmark{left}{$p_{32}$} & p_{33} & \tikzmark{right}{$p_{34}$} \notag \\
        \end{bmatrix} \Highlight[third] \begin{bmatrix}
            0 & 1 & 0 & 0 \\
            0 & 0 & 1 & 0 \\
            0 & 0 & 0 & 1 \\
            0 & 0 & 0 & 0 \\
        \end{bmatrix} \notag \\
        &= \begin{bmatrix}
            0 & \,\, \tikzmark{left}{$p_{32}$} & \,\, p_{33} & \,\, \tikzmark{right}{$p_{34}$} & 
        \end{bmatrix}
        \Highlight[forth] \notag \\
        B_i\tp P_{ij} B_j &= \begin{bmatrix}
            0 \\ 0 \\ 1
        \end{bmatrix}\tp \begin{bmatrix}
            p_{11} & p_{12} & p_{13} & p_{14} \\
            p_{21} & p_{22} & p_{23} & p_{24} \\
            p_{31} & p_{32} & p_{33} & \tikzmark{me}{$p_{34}$} \\
        \end{bmatrix} \Highlightme[fifth] \begin{bmatrix}
            0 \\
            0 \\
            0 \\
            1
        \end{bmatrix} = \begin{bmatrix}
            \quad \tikzmark{me}{$p_{34}$} &
        \end{bmatrix} \Highlightme[sixth] \notag
    \end{align}
    \tikz[overlay,remember picture] {
      \draw[-latex,thick,red,line width=1pt] (first) -- (second) node [pos=0.5, right] {copy};
    }
    \tikz[overlay,remember picture] {
      \draw[-latex,thick,red,line width=1pt] (third) -- (forth) node [pos=-0.2, below] {copy};
    }
    \tikz[overlay,remember picture] {
      \draw[-latex,thick,red,line width=1pt] (fifth) -- (sixth) node [pos=0.8, below] {copy};
    }
    \noindent \cref{eq:sparsity-example} shows that $A_i\tp$ shifts $P$ downwards in $A_i\tp P$, $A_j$ shifts $P$ rightwards in $P A_j$, $B_i\tp$ selects last row of $P$ in $B_i\tp P$, and $B_j$ selects last column of $P$ in $P B_j$. 
\end{example}
As discussed previously, from the controllable $(A_{co}, B_{co})$, we calculate a similarity transformation $T_{jo}$, a deadbeat gain $F_{db}$, and an input transformation $G$ to devise the following Brunovsky LQ OCP that is similar to~\cref{eq:lqr-controllable}: 
\begin{subequations}
    \begin{align}
        \underset{z, v}{\text{min}} & \quad \sum_{k=0}^{N-1} \frac{1}{2} 
        \begin{bmatrix}
         z_k \\
         v_k
        \end{bmatrix}\tp 
        \begin{bmatrix}
        \Tilde{Q}_k & \Tilde{S}_k\tp \\
        \Tilde{S}_k & \Tilde{R}_k
        \end{bmatrix}
        \begin{bmatrix}
         z_k \\
         v_k 
        \end{bmatrix} + 
        \begin{bmatrix}
         z_k \\
         v_k 
        \end{bmatrix}\tp 
        \begin{bmatrix}
        \Tilde{q}_k \\
        \Tilde{r}_k
        \end{bmatrix} \notag \\
        & \quad + 
        \frac{1}{2} z_N\tp \Tilde{Q}_N z_N +   z_N\tp \Tilde{q}_N \\ 
        \text{s.t. } & \quad 
         z_{k+1} = A_{b} z_k + B_{b} v_k + \Tilde{b}_k \quad z_0 \text{ given}
    \end{align} 
    \label{eq:lqr-brunovsky}
\end{subequations}
where the state transformation $x^c = T_{jo}\iv z$ and transformations $u^c = F_{db} x^c + G v$ on input are applied simultaneously. The new terms in~\cref{eq:lqr-brunovsky} are defined as follows: 
\begin{subequations}
    \begin{align}
        \Tilde{Q}_k &= T_{jo}\itp (Q_k^c + F_{db}\tp R_k F_{db}) T_{jo}^{-1} \label{eq:new-brunovsky-cost-Q} \\
        \Tilde{R}_k &= G\tp R_k G \qquad \Tilde{r}_k = G\tp r_k \label{eq:new-brunovsky-cost-R-r} \\
        \Tilde{S}_k &= G\tp (S_k^c + R_k F_{db})T_{jo}^{-1} \label{eq:new-brunovsky-cost-S} \\
        \Tilde{q}_k &= T_{jo}\itp (q_k^c + F_{db}\tp r_k) \label{eq:new-brunovsky-cost-q} \\
        \Tilde{b}_k &= T_{jo} b_k^{co} \qquad z_0 = T_{jo} x_0^c \label{eq:new-brunovsky-cost-offset-initial-state}
    \end{align}
    \label{eq:new-brunovsky-cost}
\end{subequations}
\vspace{-1.5em}
\begin{proposition}
    The solution to~\cref{eq:lqr-controllable} $\{x_k^{c*}, u_k^{c*}\} $ can be obtained from the solution to~\cref{eq:lqr-brunovsky} $\{z_k^{*}, v_k^{*}\}$ via:
    \begin{equation}
        x_k^{c*} = T_{jo}\iv z_k^* \qquad 
        u_k^{c*} = F_{db}x_k^{c*} + G v_k^* 
    \label{eq:solution-recover-brunovsky}
    \end{equation}
\end{proposition}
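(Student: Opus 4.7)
The plan is to verify that the affine change of variables $x_k^c = T_{jo}\iv z_k$, $u_k^c = F_{db} x_k^c + G v_k$ is a bijection between the feasible sets of~\cref{eq:lqr-controllable} and~\cref{eq:lqr-brunovsky}, and that it preserves the objective value. Given both properties, mapping the optimizer of~\cref{eq:lqr-brunovsky} through~\cref{eq:solution-recover-brunovsky} produces a feasible point of~\cref{eq:lqr-controllable} with identical cost, and conversely, so the image must be optimal. Since $T_{jo}$ and $G$ are nonsingular (by~\cref{eq:transformation-brunovsky} and the construction in~\cref{sec:lti-to-brunovsky}), the map is invertible, which is what makes it a bijection rather than just an injection.

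For the feasibility step, I would substitute $x_k^c = T_{jo}\iv z_k$ and $u_k^c = F_{db} T_{jo}\iv z_k + G v_k$ into the controllable dynamics $x_{k+1}^c = A_{co}x_k^c + B_{co}u_k^c + b_k^{co}$, then left-multiply by $T_{jo}$:
\begin{equation*}
z_{k+1} = T_{jo}(A_{co}+B_{co}F_{db})T_{jo}\iv z_k + T_{jo}B_{co}G\, v_k + T_{jo} b_k^{co}.
\end{equation*}
Invoking~\cref{eq:transformation-brunovsky} for the $A_b$ and $B_b$ factors and~\cref{eq:new-brunovsky-cost-offset-initial-state} for $\Tilde{b}_k$ recovers the Brunovsky dynamics exactly, and the initial condition $z_0 = T_{jo} x_0^c$ matches~\cref{eq:new-brunovsky-cost-offset-initial-state}. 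The same chain read backwards (using nonsingularity of $T_{jo}$ and $G$) shows that any Brunovsky trajectory pulls back to a feasible controllable trajectory.

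For the cost equivalence, I would substitute the same expressions into the stage cost of~\cref{eq:lqr-controllable}, expand the cross terms, and collect by powers of $z_k$ and $v_k$. The quadratic in $u_k^c$ contributes $F_{db}\tp R_k F_{db}$ to the $z_k$ block and $G\tp R_k G$ to the $v_k$ block, while the mixed $u_k^{c\tp} S_k^c x_k^c$ and $u_k^{c\tp} r_k$ terms generate the cross and linear corrections. Reading off coefficients and applying~\cref{eq:new-brunovsky-cost-Q,eq:new-brunovsky-cost-R-r,eq:new-brunovsky-cost-S,eq:new-brunovsky-cost-q} one-to-one shows the stage cost equals its Brunovsky counterpart; the terminal cost follows identically with $F_{db}, G$ absent.

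The main obstacle is not conceptual but bookkeeping: the definitions~\cref{eq:new-brunovsky-cost-Q,eq:new-brunovsky-cost-R-r,eq:new-brunovsky-cost-S,eq:new-brunovsky-cost-q} were chosen precisely so that the expansion closes, so the verification is a deterministic matching of six symmetric blocks (Hessian quadratic, Hessian cross, Hessian in $v$, gradient in $z$, gradient in $v$, constant) between the two cost expressions. Care is needed to ensure that the symmetric form $\Tilde{S}_k\tp = T_{jo}\itp(S_k^{c\tp} + F_{db}\tp R_k)G$ used in~\cref{eq:lqr-brunovsky} is consistent with~\cref{eq:new-brunovsky-cost-S} (it is, by transposition), and that the isolated appearance of $R_k$, $r_k$ on the $v_k$ side is correctly wrapped by $G$.
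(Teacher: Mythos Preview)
Your proposal is correct and follows exactly the approach the paper takes: the paper's proof is a one-line appeal to the coordinate transformations~\cref{eq:transformation-brunovsky,eq:solution-recover-brunovsky} together with the cost definitions~\cref{eq:new-brunovsky-cost}, and you have simply unpacked that appeal into its two natural pieces (feasibility bijection and cost invariance). The only difference is level of detail; the underlying argument is identical.
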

\begin{proof}
    The result follows from the coordinate transformations~\cref{eq:transformation-brunovsky,eq:solution-recover-brunovsky}, and the new ingredients in~\cref{eq:new-brunovsky-cost}. 
\end{proof}
\subsection{Summary and remarks on the chain \texorpdfstring{\cref{eq:lq-ocp-all}~$\rightarrow$~\cref{eq:lqr-controllable}~$\rightarrow$~\cref{eq:lqr-brunovsky}}{}}
\setlength{\textfloatsep}{0pt}
Here we present the main contribution of this paper, the \textit{Brunovsky Riccati recursion}, in~\cref{alg2:brunovsky-riccati}. The complexity-wise and procedural comparison between~\cref{alg1:riccati-classic,alg2:brunovsky-riccati} are summarized in~\cref{fig:diagram}. 
\vspace{-0.8em}
\begin{algorithm}[ht]
\begin{spacing}{1}
\caption{Brunovsky Riccati recursion to solve~\Crefrange{eq:lq-ocp-cost}{eq:lq-ocp-dynamics} via the chain~\cref{eq:lq-ocp-all} $\rightarrow$~\cref{eq:lqr-controllable} $\rightarrow$~\cref{eq:lqr-brunovsky}}
\begin{algorithmic}[1]
\Require $x_0, \{Q_k, S_k, R_k, q_k, r_k, b_k\}, A, B$
\State Kalman decomposition as in~\cref{eq:kalman-decomposition-controllable} to get $T_{kd}$. \label{alg2:line:kalman-decompose}
\For{$k = 1:N$ \textbf{in parallel}}
\State Compute $b_k^{c}, b_k^{uc}$ in~\cref{eq:new-offset-controllable}. \label{alg2:line:parallel-offset}
\EndFor
\State Simulate the uncontrollable part in~\cref{eq:uncontrollable-dynamics} to get $\{x^{uc}_k\}$. 
\State Compute $T_{jo}, F_{db}, G$ in~\cref{eq:transformation-brunovsky} to get Brunovsky $(A_b, B_b)$. \label{alg2:line:brunovsky-transformation}
\State Compute initial state $z_0$ using~\cref{eq:new-controllable-cost-initial-state} and then~\cref{eq:new-brunovsky-cost-offset-initial-state}. 
\For{$k = 1:N$ \textbf{in parallel}}
\State Compute $Q_k^c, S_k^c, q_k^c, b_k^{co}$ in~\cref{eq:new-controllable-cost}. \label{alg2:line:parallel-controllable-cost}
\State Compute $\tilde{Q}_k, \tilde{R}_k, \tilde{S}_k, \tilde{q}_k, \tilde{r}_k, \tilde{b}_k$ in~\cref{eq:new-brunovsky-cost}.\label{alg2:line:parallel-brunovsky-cost}
\EndFor
\State Pass $z_0, \{\tilde{Q}_k, \tilde{R}_k, \tilde{S}_k, \tilde{q}_k, \tilde{r}_k, \tilde{b}_k\}, A_b, B_b$ to~\cref{alg1:riccati-classic}. Compute $A\tp P A, B\tp P A, B\tp P B$ with~\cref{eq:sparsity-block,eq:sparsity-pattern}. Get the solution $\{z_k^*, v_k^*\}$. \label{alg2:line:riccati-less-flops}
\For{$k = 1:N$ \textbf{in parallel}}
\State Recover the solution $\{x_k^{c*}, u_k^{c*}\}$ to~\cref{eq:lqr-controllable} with~\cref{eq:solution-recover-brunovsky}. \label{alg2:line:parallel-solution-controllable}
\State Recover the solution $\{x_k^{*}, u_k^{*}\}$ to~\Crefrange{eq:lq-ocp-cost}{eq:lq-ocp-dynamics} with~\cref{eq:solution-recover-controllable}.\label{alg2:line:parallel-solution-original}
\EndFor
\Ensure $\{x_k^*, u_k^*\}$
\end{algorithmic}
\label{alg2:brunovsky-riccati}
\end{spacing}
\end{algorithm}
\vspace{-0.8em}

\begin{figure*}[bp]
\vspace{-1em}
\centering
\begin{tikzpicture}[
        node distance=4ex and 0em,
        block/.style={rectangle, draw, fill=white!20, 
    text width=18em, text centered, rounded corners, minimum height=3em},
        line/.style={draw, -latex},
        very thick,
        decoration={
        markings,
        mark=at position 0.5 with {\arrow{latex}}}
        ]

        \node [block] (1) {Original coordinate~\cref{eq:lq-ocp-all} \vbox{ \begin{itemize}
            \item Initial state $x_0$ 
            \item Costs $\{Q_k, S_k, R_k, q_k, r_k\}$
            \item Dynamics $(A, B)$ \& $\{b_k\}$
        \end{itemize}}};
        \node[below right, inner sep=5pt] at (1.north west) {\textcolor{brown}{A.}};
        
        \node [block, right=of 1, xshift=3cm] (2) {Brunovsky coordinate~\cref{eq:lqr-brunovsky} \vbox
            {\begin{itemize}
                \item Initial state $z_0$~\cref{eq:new-brunovsky-cost-offset-initial-state}
                \item Costs $\{\tilde{Q}_k, \tilde{R}_k, \tilde{S}_k, \tilde{q}_k, \tilde{r}_k\}$~\Crefrange{eq:new-brunovsky-cost-Q}{eq:new-brunovsky-cost-q}
                \item Dynamics $(A_b, B_b)$~\cref{eq:brunovsky-form} \& $\{\tilde{b}_k\}$~\cref{eq:new-brunovsky-cost-offset-initial-state}
            \end{itemize}}
        };
        \node[below right, inner sep=5pt] at (2.north west) {\textcolor{brown}{B.}};

        \node [block, below=of 2, yshift=-1.5cm] (3) {Brunovsky solution $\{z_k^*, v_k^*\}$};
        \node[below right, inner sep=5pt] at (3.north west) {\textcolor{brown}{C.}};

        \node [block, left=of 3, xshift=-3cm] (4) {Original solution $\{x_k^*, u_k^*\}$};
        \node[below right, inner sep=5pt] at (4.north west) {\textcolor{brown}{D.}};

        \draw[-latex] ([xshift=.4cm]1.362) -- ([xshift=-.4cm]2.178) 
        node [text width=3cm,midway,above,align=center ] 
        {Transformation in \textbf{parallel}};;
        \draw[-latex] ([xshift=.4cm]1.357) -- ([xshift=-.4cm]2.183) 
        node [text width=3cm,midway,below,align=center ] 
        {FLOPs $\mathcolor{purple}{\mathcal{O}(Nn_x^3)}$ \\
        $\Downarrow$ \\
        Time $\mathcolor{blue}{\mathcal{O}(n_x^3)}$ \checkmark};;
        
        \draw[-latex,line width=2pt] ([yshift=-.4cm]2.270) -- ([yshift=.4cm]3.90) 
        node [text width=2.6cm,midway,right,align=center ]
        {Time complexity $\mathcolor{blue}{\mathcal{O}(N(n_x^2 n_u + n_x n_u^2 + n_u^3))}$}
        node [text width=2.6cm,midway,left,align=center ] 
        {Riccati recursion \\
        in \textbf{serial}};;
        
        \draw[-latex] ([xshift=-.4cm]3.171) -- ([xshift=.4cm]4.009)
        node [text width=3cm,midway,above,align=center ] 
        {
        Time $\mathcolor{blue}{\mathcal{O}(n_x^2)}$ \checkmark \\
        };;   
        \draw[-latex] ([xshift=-.4cm]3.175) -- ([xshift=.4cm]4.005)   node [text width=3cm,midway,below,align=center ] 
        {Transformation in \textbf{parallel}};;  
              
        \draw[-latex,line width=2pt] ([yshift=-.4cm]1.270) -- ([yshift=.4cm]4.90) 
        node [text width=2.7cm,midway,left,align=center ] 
        {Time complexity $\mathcolor{red}{\mathcal{O}(N(n_x^3 + n_x^2 n_u + n_x n_u^2 + n_u^3))}$}
        node [text width=2.6cm,midway,right,align=center ] 
        {Riccati recursion \\
        in \textbf{serial}};;
\end{tikzpicture}
\caption{Classical and Brunovsky Riccati recursion, omitting the intermediate~\cref{eq:lqr-controllable}. Complexity of~\cref{alg1:riccati-classic} (\textcolor{brown}{A} $\rightarrow$ \textcolor{brown}{D}): $\mathcolor{red}{\mathcal{O}(N(n_x^3 + n_x^2 n_u + n_x n_u^2 + n_u^3))}$. Complexity of~\cref{alg2:brunovsky-riccati} (\textcolor{brown}{A} $\rightarrow$ \textcolor{brown}{B} $\rightarrow$ \textcolor{brown}{C} $\rightarrow$ \textcolor{brown}{D}): $\mathcolor{blue}{\mathcal{O}(n_x^3 + N(n_x^2 n_u + n_x n_u^2 + n_u^3))}$}
\label{fig:diagram}
\end{figure*}


\cref{alg2:line:riccati-less-flops} dominates the time complexity of~\cref{alg2:brunovsky-riccati}, which is still a Riccati recursion but accelerated by the sparsity of Brunovsky form (block-diagonality and zero-one pattern). The per iteration complexity in~\cref{eq:flop-count-riccati} is dramatically reduced to $\alpha = 0, \beta = 2, \gamma = 1, \delta = \nicefrac{1}{3}$, bringing the total time complexity for horizon $N$ down to $N (2n_x^2 n_u + n_x n_u^2 + \frac{1}{3}n_u^3)$. $n_x^3$ is hidden by the $n_x^2$ memory copy and this can be a significant improvement because $n_x \gg n_u$ often prevails. 

In~\cref{alg2:line:kalman-decompose,alg2:line:brunovsky-transformation} of~\cref{alg2:brunovsky-riccati}, the transformations $T_{kd}, T_{jo}, F_{db}, G$ are computed \textbf{only once}. 
The time complexity is $\mathcal{O}(n_x^3 + n_x^2 n_u + n_x n_u^2 + n_u^3)$. Different methods change the coefficients, but in general they are insignificant compared with the Riccati recursion. 

The key success of~\cref{alg2:brunovsky-riccati} is to distribute the original serial $\mathcal{O}(N n_x^3)$ complexity (\cref{alg1:line:APA} of~\cref{alg1:riccati-classic}) to $N$ threads running in \textbf{parallel} and \textbf{a priori} (\cref{alg2:line:parallel-controllable-cost,alg2:line:parallel-brunovsky-cost} of~\cref{alg2:brunovsky-riccati}). \cref{fig:diagram} sheds light on the intuition. The time invariancy of the linear system guarantees the transformations $T_{kd}, T_{jo}, F_{db}, G$ are constant along the horizon thus data parallelism can be implemented ahead of the sequentially executed Riccati recursion. Though more floating point operations are conducted in total due to extra transformations, the overall time complexity reduces to $\mathcal{O}(n_x^3)$ without $N$ involved, if there are $N$ threads/cores available for parallelism. 

If only $T_{jo}, F_{db}$ are used in~\cref{eq:transformation-brunovsky} (as in the newly proposed perspective), the cost weights in~\cref{eq:lqr-brunovsky} remain unchanged. Computations in~\cref{eq:new-brunovsky-cost-R-r} are avoided. However, it also leads to larger $\beta, \gamma$ in~\cref{eq:flop-count-riccati} (because $T_{jo} B_{co} \neq B_b$ is dense so~\cref{eq:sparsity-block,eq:sparsity-pattern} apply only to $A_b$), which is NOT preferred because the serial Riccati recursion takes the most amount of time. 

$T_{kd}, T_{jo}, F_{db}$ are dense. This implies that the potential diagonality of $Q_k$ in~\cref{eq:lq-ocp-cost} will be destroyed by~\cref{eq:new-controllable-cost-Q} and~\cref{eq:new-brunovsky-cost-Q}. However, this has no impact on the complexity up to cubic order because matrix addition in~\cref{alg1:line:APA} of~\cref{alg1:riccati-classic} costs $n_x^2$ only. Similar consequence applies to $G$ and $R_k$. 
\subsection{Inequality constrained LQ OCP}

It is well-known that the linear inequality in~\cref{eq:lq-ocp-inequality,eq:ocp-qp-constraints} will not destroy the stage-wise structure in all kinds of optimization algorithms (active set, interior point, ADMM, etc. ). The slacks and Lagrangian multipliers affect the diagonal blocks of $H$ and the vector $g$ in~\cref{eq:kkt-system}. The numerical values of $Q_k, R_k, S_k, q_k, r_k$ in~\cref{eq:lq-ocp-cost} are modified but not the OCP QP structure. As a result, the chain of transformation~\cref{eq:lq-ocp-all} $\rightarrow$~\cref{eq:lqr-controllable} $\rightarrow$~\cref{eq:lqr-brunovsky} is applicable to linear inequalities~\cref{eq:lq-ocp-inequality}. 

Denote $C T_{kd}\iv = \begin{bmatrix}
    C^c & C^{uc}
\end{bmatrix}$ with $C^c \in \mathbb{R}^{n_i \times n_x^c}, C^{uc} \in \mathbb{R}^{n_i \times n_x^{uc}}$, the inequality that will be part of~\cref{eq:lqr-controllable} becomes: 
\begin{equation}
    C^c x_k^{c} + D u_k \leq d - C^{uc} x_k^{uc}
    \tag{11c}
    \label{eq:inequality-controllable-coordinate}
\end{equation}
Similarly, in~\cref{eq:lqr-brunovsky}, the inequality becomes
\begin{equation}
    (C^c + D F_{db})T_{jo}\iv z_k + D G v_k \leq d - C^{uc} x_k^{uc}
    \tag{17c}
    \label{eq:inequality-brunovsky-coordinate}
\end{equation}

For general non-zero $C, D$ in~\cref{eq:lq-ocp-inequality}, not much is changed compared with~\cref{eq:inequality-brunovsky-coordinate}. However, if $C=0$, i.e., only box constraints are presented for inputs, $F_{db}$ will wipe the possibility of a projection method in ADMM~\cite{o2013splitting}. Nevertheless, it has little influence in an interior point method setup. 

\section{NUMERICAL RESULTS}
\begin{figure}[ht]
    \centering
    \includegraphics[width=1.0\linewidth]{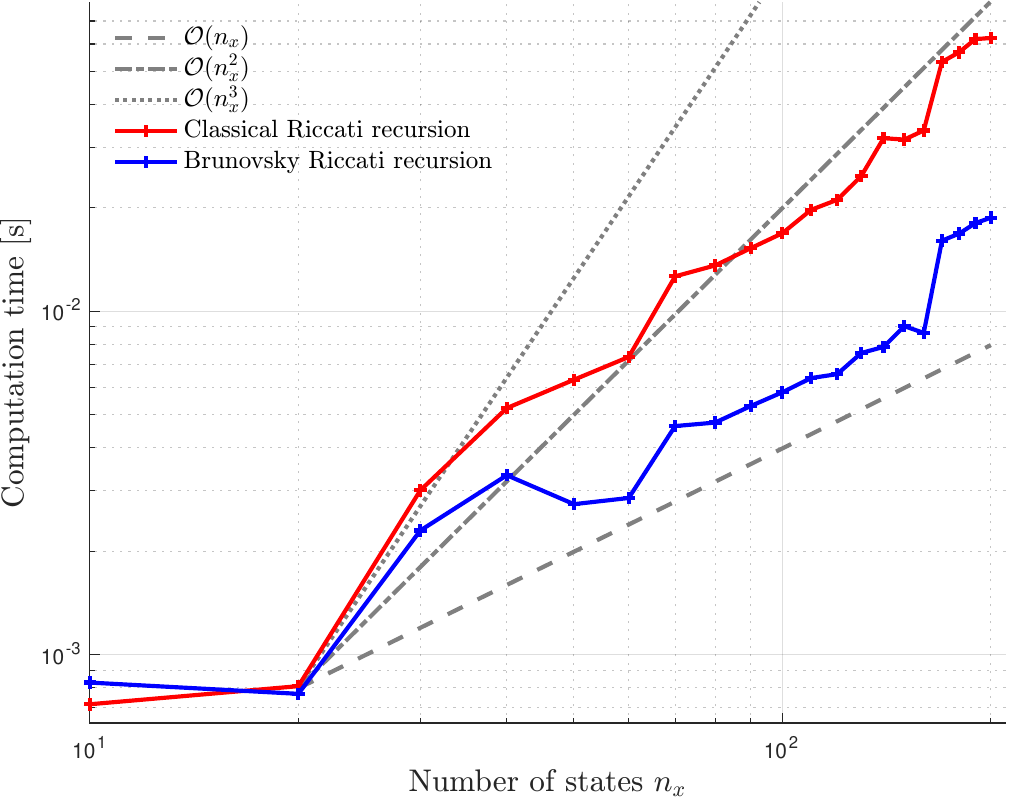}
    \caption{Classical/Brunovsky Riccati recursion comparison. Red curve:~\cref{alg1:riccati-classic}. Blue curve:~\cref{alg2:brunovsky-riccati}. Grey curves: auxiliary lines. }
    \label{fig:riccati-comparison}
\end{figure}
\vspace{-1em}
A numerical experiment was conducted using Matlab on MacBook Pro with Apple M1 Max chip (10-core CPU). Random controllable pairs $(A_{co}, B_{co})$ and sets of cost coefficients $\{Q_k^c, R_k^c\}$ are generated for fixed horizon length $N = 50$, control input $n_u = 10$, and varying state size $n_x \in \{10k, 20 \geq k \geq 1\}$. \cref{alg1:riccati-classic,alg2:brunovsky-riccati} are run for 100 times for different setups and the average running time is calculated. The results are summarized in~\cref{fig:riccati-comparison}. 

The blue curve is lower than the red one when $n_x > 20$, illustrating the time efficiency of our algorithm. When $n_x$ is relatively small, the red curve matches $\mathcal{O}(n_x^3)$ and the blue curve matches $\mathcal{O}(n_x^2)$. As $n_x$ grows larger, the internal matrix acceleration tricks of Matlab dominate and bend the respective curves to $\mathcal{O}(n_x^2)$ and $\mathcal{O}(n_x)$. 

The current implementation leverages the sparse matrix class rooted in Matlab, so it only takes advantage of the block-diagonality. If dedicated linear algebra routines are implemented in C\texttt{++}, the zero-one pattern will further accelerate the computation. This, together with inequality constraints in the interior point method framework and conditioning analysis for $T_{jo}, F_{db}, G$, are left for future work. 

\section{CONCLUSIONS}

In this paper, we propose a novel \textit{Brunovsky Riccati recursion} algorithm for linear quadratic optimal control problems with time invariant systems, which is significantly faster than the state-of-the-art Riccati solver. This is achieved with transformation of LTI systems to a controllable form then to the Brunovsky form, sparsity exploitation of the Brunovsky form with a custom linear algebra routine, and parallel computation before and after the Riccati recursion. We also propose a new insight to transform arbitrary controllable linear systems to Brunovsky form, where deadbeat control and Jordan normal form bridge the gap. 



\addtolength{\textheight}{-12cm}   

\bibliographystyle{IEEEtran}
\bibliography{IEEEabrv,citation.bib}

\end{document}